\documentclass[reqno,12pt]{amsart}

\usepackage{amsmath, epsfig, cite}
\usepackage{amssymb}
\usepackage{amsfonts}
\usepackage{latexsym}
\usepackage{amsthm}

\usepackage{amsmath, epsfig, cite}
\usepackage{amssymb}
\usepackage{amsfonts}
\usepackage{latexsym}
\usepackage{amsthm}
\newtheorem{theorem}{Theorem}[section]

\newtheorem{corollary}[theorem]{Corollary}
\newtheorem{lemma}[theorem]{Lemma}

\theoremstyle{remark}

\setlength{\textwidth}{160.0mm} \setlength{\oddsidemargin}{0mm}
\setlength{\evensidemargin}{0mm} \addtolength{\topmargin}{-.1cm}
\addtolength{\textheight}{.2cm}

\numberwithin{equation}{section}

\newcommand{\N}{\mathbb N}
\newcommand{\Z}{\mathbb Z}

\allowdisplaybreaks

\author{Xiaoxia Wang }
\address{DEPARTMENT OF MATHEMATICS, SHANGHAI UNIVERSITY, SHANGHAI 200444, P. R. CHINA}
\email{$^*$ Corresponding author. xiaoxiawang@shu.edu.cn (X. Wang), xchangi@shu.edu.cn (C. Xu).}
\thanks{This work is supported by Natural Science Foundations of Shanghai (22ZR1424100).}
\author{Chang Xu$^*$}

\title{$q$-Supercongruences on triple and quadruple sums }

\subjclass[2010]{Primary 33D15; Secondary 11A07, 11B65}

\keywords{basic hypergeometric series; $q$-supercongruences; creative microscoping; Chinese remainder theorem}

\begin{document}
\begin{abstract}
Inspired by the recent work of El Bachraoui, we present some
new $q$-supercongruences on triple and quadruple  sums of basic hypergeometric series.
In particular, we give a $q$-supercongruence modulo the fifth power of a
cyclotomic polynomial, which is a $q$-analogue of the quadruple sum  of
Van Hamme's supercongruence (G.2).
\end{abstract}
\maketitle
\section{Introduction}
In 1913, Ramanujan (see \cite{Berndt}) proposed some well-known hypergeometric series without any proof, including
\begin{equation*}
\sum_{k=0}^\infty(8k+1)\frac{\left(\frac{1}{4}\right)^4_k}{k!^4}
=\frac{2\sqrt{2}}{\sqrt{\pi}\Gamma\left(\frac{3}{4}\right)^2}.
\end{equation*}
Here $(a)_n=a(a+1)\cdots(a+n-1)$ is the Pochhammer symbol and $\Gamma(x)$ is the gamma function.
In 1997, Van Hamme \cite{Van} studied partial sums of Ramanujan's summation formular for $1/\pi$
 and conjectured $13$ Ramanujan-type supercongruences, involving
\begin{align*}
&\mathrm{(G.2)}\quad\sum_{k=0}^{\frac{p-1}{4}}(8k+1)\frac{\left(\frac{1}{4}\right)^4_k}{k!^4}
\equiv\frac{p\Gamma_p\left(\frac{1}{4}\right)\Gamma_p\left(\frac{1}{2}\right)}{\Gamma_p\left(\frac{3}{4}\right)}\pmod{p^3},
\quad\mathrm{if}\quad p\equiv 1\pmod4.
\end{align*}
Here and in what follows, $p$ is an odd prime  and
 the $p$-adic gamma function is defined as
 $$\Gamma_p(x)=\lim_{m\rightarrow x}\left(-1\right)^m\prod_{\substack{0< k< m\\ (k,p)=1}}k,$$
 where the limit is for $m$ tending to $x$ $p$-adically in $\Z_{\ge 0}$.

The Ramanujan-type supercongruences and their $q$-analogues have attracted a number of experts.
The reader interested in $q$-analogues of supercongruences is referred to
 \cite{Guo9, Guo_Schlosser2, Liu_Wang1, WangChen, Wang_Yu,  WeiChuanan, Xu_Wang}.
Especially,
a $q$-analogue of Van Hamme's (G.2) was proposed by Liu and Wang \cite{Liu_Wang2} as follows: for integers $n\equiv1\pmod4$,
\begin{equation}
\sum_{k=0}^{\frac{n-1}{4}}[8k+1]\frac{(q;q^4)^4_k}{(q^4;q^4)^4_k}q^{2k}
\equiv
\frac{(q^2;q^4)_{\frac{n-1}{4}}}{(q^4;q^4)_{\frac{n-1}{4}}}[n]q^{\frac{1-n}{4}} \pmod{[n]\Phi_n(q)^2}.
\end{equation}

We now need to introduce some necessary definitions and concepts.
Let $a$, $q$ be  complex numbers with $\left|q\right|<1$ and $n$ a nonnegative integer.
The {\em $q$-integer} is defined as $[n]=[n]_q=1+q+\cdots+q^{n-1}$.
And  the {\em $q$-shifted factorial} is defined  by
\begin{equation*}
(a;q)_0=1\quad \mathrm{and} \quad(a;q)_n=(1-a)(1-aq)\cdots(1-aq^{n-1}).
\end{equation*}
Usually,   the multiple $q$-shifted factorial is directly written  as
\begin{equation*}
(a_1,a_2,\ldots,a_r;q)_m=(a_1;q)_m (a_2;q)_m\cdots (a_r;q)_m,
\end{equation*}
where $r\in\N$ and $m\in\Z\cup\left\{\infty\right\}$.
Moreover,  the $n$-th {\em cyclotomic polynomial} in $q$ is represented by $\Phi_n(q)$:
\begin{align*}
\Phi_n(q)=\prod_{\substack{1\leqslant k\leqslant n\\ \gcd(n,k)=1}}\left(q-\zeta^k\right),
\end{align*}
where $\zeta$ is an $n$-th primitive root of unity.

By transforming $q$-supercongruences on double sums into the ones from squares of truncated
basic hypergeometric series,
El Bachraoui \cite{Bachraoui} obtained the following result: for any positive odd integer $n$ with $\gcd(n,6)=1$,
\begin{equation*}
\sum_{k=0}^{n-1}\sum_{j=0}^{k}c_q(j)c_q(k-j)
\equiv
q^{1-n}[n]^2 \pmod{[n]\Phi_n(q)^2},
\end{equation*}
where $c_q(k)=[8k+1](q;q^2)^2_k(q;q^2)_{2k}q^{2k^2}/((q^6;q^6)^2_k(q^2;q^2)_{2k})$.
Recently, many experts paid attention to El Bachraoui's work and obtained some new results (see, for example, \cite{Guo_Li, Song_Wang, LiLong}).

Inspired by the above work, in this paper,
by some essential tools including
the `creative microscoping' method introduced  by Guo and Zudilin \cite{Guo_Zudilin} and
 the Chinese remainder theorem for coprime polynomials, we shall
investigate several $q$-supercongruences on triple and quadruple sums.

\begin{theorem}\label{THM1}
Let $d$ and $n$ be positive integers with $d\geq3$ and $n\equiv1\pmod d$. For $k\geq 0$, let
\begin{equation*}
c_q(k)=[2dk+1]\frac{(q;q^d)^4_k}{(q^d;q^d)^4_k}q^{(d-2)k}.
\end{equation*}
Then, we have
\begin{align}\label{AE1}
\sum_{{i+j+k\leq n-1}}c_q(i)c_q(j)c_q(k)
\equiv
[n]^3q^{\frac{3(1-n)}{d}}\frac{(q^2;q^d)^3_{\frac{n-1}{d}}}{(q^d;q^d)^3_{\frac{n-1}{d}}}\pmod{[n]\Phi_n(q)^3}.
\end{align}
\end{theorem}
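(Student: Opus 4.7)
The overall plan is to reduce the triple sum to the cube of a one-variable sum and then exploit a single-variable $q$-supercongruence. Writing $P_n := \sum_{k=0}^{(n-1)/d} c_q(k)$, the pivotal preliminary step is the congruence
\begin{equation*}
P_n \equiv T_n \pmod{[n]\Phi_n(q)^2}, \qquad T_n := [n]\,q^{(1-n)/d}\frac{(q^2;q^d)_{(n-1)/d}}{(q^d;q^d)_{(n-1)/d}},
\end{equation*}
which extends the Liu--Wang result (1.1) from $d = 4$ to every $d \geq 3$. I plan to establish it via the Guo--Zudilin creative microscoping recipe: introduce a parameter $a$ by replacing two of the four $(q;q^d)_k$ factors in $c_q(k)$ with $(aq;q^d)_k(q/a;q^d)_k$; note that the specialisations $a = q^{\pm n}$ truncate the series at $k = (n-1)/d$ and permit evaluation in closed form by a terminating very-well-poised ${}_6\phi_5$ summation; combine the two evaluations by the Chinese remainder theorem on the coprime pair $1 - aq^n$ and $a - q^n$; and finally let $a \to 1$.

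Given the single-sum congruence, the reduction to $P_n^3$ rests on two observations. First, in the range $(n-1)/d < k \leq n-1$ the Pochhammer $(q;q^d)_k$ picks up exactly one copy of $1-q^n$ (from the inner index $j = (n-1)/d$), so that $(q;q^d)_k^4$ is divisible by $\Phi_n(q)^4$; on the other hand, $(q^d;q^d)_k^4$ stays coprime to $\Phi_n(q)$ because $\gcd(n,d) = 1$. Hence $c_q(k) \equiv 0 \pmod{\Phi_n(q)^4}$ throughout that range. Second, because $d \geq 3$, any triple $(i,j,k)$ with each entry at most $(n-1)/d$ automatically obeys $i + j + k \leq 3(n-1)/d \leq n - 1$ and so lies in the summation range of (\ref{AE1}). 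Together these yield
\begin{equation*}
\sum_{i+j+k \leq n-1} c_q(i)c_q(j)c_q(k) = P_n^3 + E,
\end{equation*}
where $E$ collects the triples having at least one index above $(n-1)/d$, so that $E \equiv 0 \pmod{\Phi_n(q)^4}$.

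To finish, I need $P_n^3 \equiv T_n^3 \pmod{[n]\Phi_n(q)^3}$ and the vanishing of $E$ modulo the remaining cyclotomic factors $\prod_{d' \mid n,\, 1 < d' < n}\Phi_{d'}(q)$. Writing $P_n = T_n + [n]\Phi_n(q)^2 M$ and expanding $P_n^3 - T_n^3 = (P_n - T_n)(P_n^2 + P_n T_n + T_n^2)$ shows that the first factor is divisible by $[n]\Phi_n(q)^2$ and the second by $[n]^2$ (since both $P_n$ and $T_n$ carry the factor $[n]$). The product is therefore divisible by $[n]^3 \Phi_n(q)^2$, which in turn forces divisibility by $[n]\Phi_n(q)^3$ because $[n]^2/\Phi_n(q)$ is still polynomial. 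For the $\Phi_{d'}(q)$ control, it suffices to show that the full triple sum vanishes modulo each $\Phi_{d'}(q)$ (matching $T_n^3 \equiv 0 \pmod{\Phi_{d'}(q)^3}$); this I would do by specialising $q$ to a primitive $d'$-th root of unity and exploiting the resulting cancellations, or equivalently by recycling the parametric identity at $a = 1$ modulo $\Phi_{d'}(q)$. Assembling via the Chinese remainder theorem on the mutually coprime factors $\Phi_n(q)^4$ and the $\Phi_{d'}(q)$ closes the proof.

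The principal obstacle is the first step — producing the single-variable $q$-supercongruence modulo $[n]\Phi_n(q)^2$ for every $d \geq 3$ — which rests on locating the correct terminating $q$-hypergeometric identity whose closed form matches $T_n$ once $a = 1$ is imposed, and on verifying the required mod $(1-aq^n)(a-q^n)\Phi_n(q)$ structure. The mod $\Phi_{d'}(q)$ analysis for proper divisors $d' > 1$ is secondary but technically delicate, because individual terms of the triple sum need not vanish modulo $\Phi_{d'}(q)$; the required collective cancellation must be exhibited.
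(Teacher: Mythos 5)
Your overall architecture is sound and close in spirit to the paper's: both reduce the triple sum to (essentially) the cube of the single sum $P_n=\sum_{k=0}^{(n-1)/d}c_q(k)$, both rest on the Guo--Schlosser one-variable congruence, and both must separately handle the factor $[n]/\Phi_n(q)$. Your observations that $c_q(k)\equiv 0\pmod{\Phi_n(q)^4}$ for $(n-1)/d<k\leq n-1$ and that $d\geq3$ forces every triple with all entries at most $(n-1)/d$ into the summation range are correct, and your difference-of-cubes estimate $P_n^3-T_n^3\equiv0\pmod{[n]^3\Phi_n(q)^2}$ does imply the needed congruence modulo $[n]\Phi_n(q)^3$. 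This is a legitimate reorganization: the paper instead keeps two parameters $a,b$ alive at the level of the \emph{triple} sum, invokes El Bachraoui's identity (Lemma 2.1, first part) to write the parametric triple sum exactly as a cube at the specializations $a=q^{\pm n}$ and $b=q^n$, glues by the Chinese remainder theorem, and only then lets $b\to1$, $a\to1$.

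The genuine gap is your treatment of the proper cyclotomic divisors, i.e.\ the claim that the full triple sum vanishes modulo $\Phi_{d'}(q)$ for every $d'\mid n$ with $1<d'<n$ (equivalently, modulo $[n]$). You correctly flag this as "technically delicate," but "specialising $q$ to a primitive $d'$-th root of unity and exploiting the resulting cancellations" is not an argument: the individual terms $c_\zeta(i)c_\zeta(j)c_\zeta(k)$ do not vanish, and the constraint $i+j+k\leq n-1$ destroys the naive factorization of the sum into a cube. The paper's mechanism is the second part of El Bachraoui's lemma: at a primitive $m$-th root of unity $\zeta$ ($m\mid n$, $m>1$) one has the multiplicativity $c_\zeta(lm+s)=c_\zeta(lm)c_\zeta(s)$, which lets one factor $\sum_{i+j+k=lm+s}c_\zeta(i)c_\zeta(j)c_\zeta(k)$ as $\bigl(\sum_{i+j+k=s}c_\zeta(i)c_\zeta(j)c_\zeta(k)\bigr)$ times a convolution of the values $c_\zeta(s_im)$; summing over $s=0,\dots,m-1$ produces the full length-$m$ triple sum as a factor, and that vanishes by the same $\Phi_m(q)$-divisibility argument applied to $m$. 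Without this convolution identity (or an equivalent device) the modulo-$[n]$ part of \eqref{AE1} is not established. A secondary issue: your one-parameter microscoping for $P_n\equiv T_n\pmod{[n]\Phi_n(q)^2}$ yields, via the terminating ${}_6\phi_5$, a closed form that is \emph{not} literally $T_n$; landing on the stated right-hand side $T_n$ is exactly what the second parameter $b$ and the CRT between $[n](1-aq^n)(a-q^n)$ and $b-q^n$ accomplish in \eqref{Equation3}, so your step 1 either needs that two-parameter setup (as in the paper) or an additional product congruence modulo $\Phi_n(q)^2$.
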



Letting $d=3$ in Theorem \ref{THM1}, we get the following result.
 \begin{corollary}
Let $n$ be a positive integer with $n\equiv1\pmod3$. For $k\geq0$, let
\begin{equation*}
c_q(k)=[6k+1]\frac{(q;q^3)^4_k}{(q^3;q^3)^4_k}q^{k}.
\end{equation*}
Then, modulo $[n]\Phi_n(q)^3$,
\begin{align*}
\sum_{{i+j+k\leq n-1}}c_q(i)c_q(j)c_q(k) \equiv
[n]^3q^{1-n}\frac{(q^2;q^3)^3_{\frac{n-1}{3}}}{(q^3;q^3)^3_{\frac{n-1}{3}}}.
\end{align*}
\end{corollary}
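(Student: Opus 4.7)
The plan is to reduce the triple sum to the cube of a single truncated sum modulo $[n]\Phi_n(q)^3$, and then prove the resulting one-variable $q$-supercongruence by the creative microscoping method. For the truncation step, observe that when $k>(n-1)/d$ the factor $1-q^n$ appears inside $(q;q^d)_k=\prod_{j=0}^{k-1}(1-q^{1+jd})$ (at $j=(n-1)/d$), so $(q;q^d)_k^4$ contains $(1-q^n)^4=(1-q)^4[n]^4$. A short analysis of cyclotomic multiplicities, using $\gcd(d,n)=1$ to bound the multiplicity of each $\Phi_{d'}(q)$ with $d'\mid n$ in the denominator $(q^d;q^d)_k^4$ when $k\leq n-1$, then shows that $c_q(k)$ is divisible by $[n]\Phi_n(q)^3$ for $(n-1)/d<k\leq n-1$. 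Moreover, because $d\geq 3$ forces $3(n-1)/d\leq n-1$, every triple with $0\leq i,j,k\leq(n-1)/d$ automatically satisfies $i+j+k\leq n-1$. Consequently,
\[
\sum_{i+j+k\leq n-1}c_q(i)c_q(j)c_q(k)\equiv\Bigl(\sum_{k=0}^{(n-1)/d}c_q(k)\Bigr)^{\!3}\pmod{[n]\Phi_n(q)^3}.
\]

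The heart of the argument is the one-variable $q$-supercongruence, extending the $d=4$ case of Liu and Wang to every $d\geq 3$:
\[
\sum_{k=0}^{(n-1)/d}c_q(k)\equiv [n]\,q^{(1-n)/d}\,\frac{(q^2;q^d)_{(n-1)/d}}{(q^d;q^d)_{(n-1)/d}}\pmod{[n]\Phi_n(q)^2}.
\]
I would prove this by creative microscoping. Introduce a parameter $a$ by replacing two of the four numerator factors $(q;q^d)_k$ with $(aq;q^d)_k(q/a;q^d)_k$ and the corresponding two denominator factors with $(aq^d;q^d)_k(q^d/a;q^d)_k$, producing an $a$-deformed sum $S_n(a)$ that collapses to the original sum at $a=1$. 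Evaluating $S_n(a)$ at $a=q^{-n}$ and $a=q^n$ via a terminating very-well-poised summation (Watson's ${}_8\phi_7$ transformation, or a suitable ${}_6\phi_5$) should yield the $a$-family of congruences modulo $(1-aq^n)(a-q^n)\Phi_n(q)$; specializing $a\to 1$ then gives the congruence modulo $\Phi_n(q)^3$. A parallel analysis at each of the remaining cyclotomic factors $\Phi_{d'}(q)$ with $d'\mid n$ and $1<d'<n$, combined with the Chinese remainder theorem for coprime polynomials, upgrades the modulus from $\Phi_n(q)^3$ to $[n]\Phi_n(q)^2$.

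To finish, let $S$ and $T$ denote the two sides of the single-sum congruence and write $S^3-T^3=(S-T)(S^2+ST+T^2)$. From the single-sum result, $[n]\Phi_n(q)^2$ divides $S-T$. Since $T$ is a multiple of $[n]$ (so $\Phi_n(q)\mid T$), we have $\Phi_n(q)\mid S$ as well, and therefore $\Phi_n(q)^2\mid S^2+ST+T^2$. Hence $S^3-T^3$ is divisible by $[n]\Phi_n(q)^4$, which is more than enough. The main technical obstacle is the single-sum step: one must locate the right terminating very-well-poised identity to evaluate the $a$-deformation at $a=q^{-n}$ in closed form matching the claimed right-hand side, and then combine this with a careful cyclotomic-by-cyclotomic analysis to push the modulus from $\Phi_n(q)^3$ up to $[n]\Phi_n(q)^2$. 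Once the single-sum supercongruence is in hand, the truncation and cubing are essentially routine bookkeeping.
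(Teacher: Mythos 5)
Your reduction of the triple sum to the cube of the single sum is where the argument breaks. The claim that $c_q(k)$ is divisible by $[n]\Phi_n(q)^3$ for $(n-1)/d<k\leq n-1$ is false: the $(1-q^n)^4$ in the numerator does give divisibility by $\Phi_n(q)^4$ (the denominator is coprime to $\Phi_n(q)$ since $\gcd(d,n)=1$ and $k\leq n-1$), but for a proper divisor $d'$ of $n$ with $d'>1$ the multiplicity of $\Phi_{d'}(q)$ in $(q;q^d)_k^4$ is generally cancelled exactly by its multiplicity in $(q^d;q^d)_k^4$. Concretely, take $d=3$, $n=4$, $k=2$: then $c_q(2)=[13]\,(1-q)^4(1-q^4)^4q^2/\bigl((1-q^3)^4(1-q^6)^4\bigr)$, and at $q=-1$ this equals $16/81\neq0$, so $\Phi_2(q)=1+q$ does not divide $c_q(2)$ even though $2>(n-1)/d=1$. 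Hence your truncation congruence is justified only modulo $\Phi_n(q)^4$, not modulo the factors $\Phi_{d'}(q)$ making up the rest of $[n]$. The paper handles exactly this point differently: the modulo-$[n]$ statement is proved not term by term but by evaluating the \emph{entire} multiple sum at each root of unity $\zeta$ of order $m\mid n$, $m>1$, using the periodicity identity $c_\zeta(lm+s)/c_\zeta(lm)=c_\zeta(s)$ together with the second part of Lemma \ref{Lemma6} (equation \eqref{Q7}) to factor the sum and show it vanishes. You would need to import that argument; without it the modulus you actually obtain is $\Phi_n(q)^4$ rather than $[n]\Phi_n(q)^3$.

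Two further remarks. First, the paper derives this corollary simply by setting $d=3$ in Theorem \ref{THM1}, whose proof uses a two-parameter deformation (in $a$ and $b$) plus the Chinese remainder theorem; your single-parameter deformation with two undeformed factors $(q;q^d)_k$ is unlikely to admit a closed-form evaluation at $a=q^{\pm n}$ — the literature (Guo--Schlosser, quoted as \eqref{Equation3}) introduces the second parameter $b$ precisely so that the specializations $a=q^{\pm n}$ and $b=q^n$ each terminate nicely, after which CRT in $a,b$ recovers the case $a=b=1$. Second, your factorization $S^3-T^3=(S-T)(S^2+ST+T^2)$ is a genuinely nice shortcut: given the known single-sum congruence modulo $[n]\Phi_n(q)^2$, it yields $S^3\equiv T^3$ modulo $[n]\Phi_n(q)^4$, which is stronger than what the cube step needs and avoids the paper's second application of CRT for the $\Phi_n(q)$-power count. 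So the cubing and the $\Phi_n(q)$-analysis are sound; the missing ingredient is the root-of-unity argument for the modulus $[n]$.
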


Note that He \cite{He} obtained the following Ramanujan-type
supercongruence: for primes $p\geq5$,
\begin{equation*}
\sum_{k=0}^{\frac{p-1}{3}}(6k+1)\frac{\left(\frac{1}{3}\right)^4_k}{k!^4}\equiv
-p\Gamma_p\left(\tfrac{1}{3}\right)^3\pmod {p^4}, \quad\mathrm{if}\
p\equiv1\pmod3,
\end{equation*}
and Liu and Wang \cite{Liu_Wang} presented several different
$q$-analogues of He's supercongruence.

Similarly, setting $d=4$ in Theorem \ref{THM1} yields a new $q$-supercongruence related to Van Hamme's (G.2) supercongruence.
\begin{corollary}
Let $n$ be a positive integer with $n\equiv1\pmod4$. For $k\geq0$, let
\begin{equation*}
c_q(k)=[8k+1]\frac{(q;q^4)^4_k}{(q^4;q^4)^4_k}q^{2k}.
\end{equation*}
Then, modulo $[n]\Phi_n(q)^3$,
\begin{align}
\sum_{{i+j+k\leq n-1}}c_q(i)c_q(j)c_q(k)
\equiv
[n]^3q^{\frac{3(1-n)}{4}}\frac{(q^2;q^4)^3_{\frac{n-1}{4}}}{(q^4;q^4)^3_{\frac{n-1}{4}}}
.\label{Equation1}
\end{align}
\end{corollary}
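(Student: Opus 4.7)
The plan is short: the statement is a direct specialization of Theorem \ref{THM1} at the parameter value $d=4$. First I would check that the hypotheses match up. The condition $d\geq 3$ in Theorem \ref{THM1} is trivially satisfied by $d=4$, and the divisibility requirement $n\equiv 1\pmod d$ becomes exactly the assumption $n\equiv 1\pmod 4$ of the corollary, so Theorem \ref{THM1} applies directly with this choice of $d$.

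Next I would confirm that the substitution turns \eqref{AE1} into \eqref{Equation1}. Setting $d=4$ in the definition of $c_q(k)$ used by Theorem \ref{THM1} gives
\[
c_q(k) = [8k+1]\frac{(q;q^4)_k^4}{(q^4;q^4)_k^4}\, q^{(4-2)k} = [8k+1]\frac{(q;q^4)_k^4}{(q^4;q^4)_k^4}\, q^{2k},
\]
which is precisely the $c_q(k)$ appearing in the corollary. On the right-hand side of \eqref{AE1}, the factor $q^{3(1-n)/d}$ becomes $q^{3(1-n)/4}$, and the ratio $(q^2;q^d)^3_{(n-1)/d}/(q^d;q^d)^3_{(n-1)/d}$ becomes $(q^2;q^4)^3_{(n-1)/4}/(q^4;q^4)^3_{(n-1)/4}$; the modulus $[n]\Phi_n(q)^3$ does not depend on $d$. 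Hence \eqref{AE1} reduces to \eqref{Equation1} verbatim.

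Because we are allowed to assume Theorem \ref{THM1}, the only potential obstacle is bookkeeping the substitution, which is entirely routine; no new analytic or arithmetic input is required. The substantive content, namely the creative microscoping construction of Guo and Zudilin together with the Chinese remainder theorem for coprime polynomials, is absorbed upstream in the proof of Theorem \ref{THM1}. The interest of this particular $d=4$ specialization is external to its proof: letting $q\to 1$ with $n=p$ prime and $p\equiv 1\pmod 4$, one has $[n]_q|_{q=1}=p$ and $\Phi_n(q)|_{q=1}=p$, so the modulus $[n]\Phi_n(q)^3$ collapses to $p^4$ and \eqref{Equation1} yields a triple-sum companion of Van Hamme's Ramanujan-type supercongruence (G.2) recalled in the introduction.
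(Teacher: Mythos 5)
Your proposal is correct and matches the paper exactly: the corollary is obtained simply by setting $d=4$ in Theorem \ref{THM1}, and your verification of the hypotheses and of the substitution into both sides of \eqref{AE1} is the whole argument. No further comment is needed.
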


Moreover, we shall give a $q$-analogue of the quadruple sum  of (G.2) as follows.
\begin{theorem}\label{THM5}
Let $n$ be a positive integer with $n\equiv1\pmod4$. For $k\geq 0$, let
\begin{equation*}
c_q(k)=[8k+1]\frac{(q;q^4)^4_k}{(q^4;q^4)^4_k}q^{2k}.
\end{equation*}
Modulo $[n]\Phi_n(q)^4$, we have
\begin{align}
\sum_{{j_1+j_2+j_3+j_4\leq n-1}}c_q(j_1)c_q(j_2)c_q(j_3)c_q(j_4)
\equiv
[n]^8q^{4(1-n)}\left(
\sum_{k=0}^{\frac{n-1}{4}}\frac{(q,q,q^3;q^4)_k}{(q^4,q^4,q^5;q^4)_k}q^{4k}
\right)^4.
\end{align}\label{AAE4}
\end{theorem}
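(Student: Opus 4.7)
The plan is to adapt the creative-microscoping strategy used in the proof of Theorem~\ref{THM1}, enhanced by one additional deformation parameter (to produce the extra power of $\Phi_n(q)$ beyond Liu--Wang's $q$-analogue of (G.2)), and to assemble the result via the Chinese remainder theorem for coprime polynomials. Because the polynomial $(1-q^n)^4\Phi_n(q)$ is divisible by $[n]\Phi_n(q)^4$, it suffices to establish the congruence modulo $(1-q^n)^4\Phi_n(q)$.

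I introduce the two-parameter family
\begin{equation*}
c_q(k;a,b)=[8k+1]\,\frac{(aq,q/a,bq,q/b;q^4)_k}{(aq^4,q^4/a,bq^4,q^4/b;q^4)_k}\,q^{2k},
\end{equation*}
which reduces to $c_q(k)$ at $a=b=1$, and consider the corresponding parametric quadruple sum. At each of the four specializations $a\in\{q^{-n},q^n\}$, $b\in\{q^{-n},q^n\}$, one of the numerator factors $(aq;q^4)_k$, $(q/a;q^4)_k$, $(bq;q^4)_k$, $(q/b;q^4)_k$ vanishes for $k\geq (n+3)/4$, forcing $c_q(k;a,b)=0$ on the tail. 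The constraint $j_1+j_2+j_3+j_4\leq n-1$ is then automatically satisfied whenever each $j_i\leq (n-1)/4$, and the parametric quadruple sum decouples into the fourth power of the single sum $T_n(a,b):=\sum_{k=0}^{(n-1)/4}c_q(k;a,b)$. Evaluating $T_n(a,b)$ at each of these specializations by a suitable $q$-Dixon- or $q$-Whipple-type summation and raising to the fourth power should reproduce the parametric lift of the asserted right-hand side. A direct evaluation at $q=\zeta$, a primitive $n$-th root of unity, with $a=b=1$, supplies a fifth congruence modulo $\Phi_n(q)$. Applying the CRT across these five pairwise coprime polynomials and then sending $a,b\to 1$ lifts the modulus to $(1-q^n)^4\Phi_n(q)$, which is divisible by $[n]\Phi_n(q)^4$, giving the theorem.

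The principal technical obstacle is the closed-form evaluation of the parametric single sum $T_n(a,b)$ at the four microscoping specializations. In the one-parameter setting, classical identities such as $q$-Dixon or $q$-Pfaff--Saalsch\"utz suffice, but the present two-parameter family requires a finer multivariable transformation, and one must recognize the resulting closed form as the parametric lift of $[n]^2q^{1-n}S$, with $S=\sum_{k=0}^{(n-1)/4}\frac{(q,q,q^3;q^4)_k}{(q^4,q^4,q^5;q^4)_k}q^{4k}$. This identification also accounts for the $[n]^8$ (rather than $[n]^4$) prefactor on the right-hand side, since the fourth power of $[n]^2q^{1-n}S$ is precisely $[n]^8q^{4(1-n)}S^4$; any misidentification of this single-sum evaluation would either produce the wrong power of $[n]$ or fail to exhibit $S$ in the expected $_3\phi_2$ form.
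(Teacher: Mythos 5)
Your overall architecture---a two-parameter creative-microscoping deformation, decoupling of the quadruple sum into a fourth power via vanishing of the tail (this is exactly El Bachraoui's Lemma~\ref{Lemma6}), the Chinese remainder theorem for coprime polynomials, and the limit $a,b\to1$---matches the paper's. But the step you yourself flag as ``the principal technical obstacle'' is a genuine gap, and your chosen deformation cannot be pushed through by the tools you invoke. For generic $a,b$ the family $[8k+1]\,(aq,q/a,bq,q/b;q^4)_k/(aq^4,q^4/a,bq^4,q^4/b;q^4)_k\,q^{2k}$ is no longer very well poised: the undeformed series owes its tractability to the pair $(q;q^4)_k/(q^4;q^4)_k$ acting together with $[8k+1]=(1-q\cdot q^{8k})/(1-q)$ as the very-well-poised factor, and your substitution deforms all four numerator slots, destroying that structure, so no $q$-Dixon or $q$-Whipple summation applies. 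More decisively, the right-hand side of the theorem is not a closed form---it contains the truncated sum $S=\sum_{k=0}^{(n-1)/4}(q,q,q^3;q^4)_k/(q^4,q^4,q^5;q^4)_k\,q^{4k}$---which already signals that what is needed at the specializations $a,b=q^{\pm n}$ is a Watson-type ${}_8\phi_7\to{}_4\phi_3$ \emph{transformation} congruence, not an evaluation.

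The paper supplies precisely this missing input. It works with a three-parameter family $z_q(k,a,b)=[2dk+1](aq,q/a,bq,q/b,cq,q;q^d)_k/(\cdots)\,(q^{2d-3}/c)^k$ (Lemma~\ref{THM4}) that keeps the very-well-poised pair $(q;q^d)_k/(q^d;q^d)_k$ intact and carries five deformable slots; the extra parameter $c$ is specialized to $q^{d-1}$ (with $d=4$) only at the very end, collapsing one Pochhammer pair and recovering $c_q(k)$. The required single-sum congruence \eqref{AE9}, modulo $[n](1-aq^n)(a-q^n)(1-bq^n)(b-q^n)$, is quoted from Wei's Theorem 5.7 in \cite{WeiChuanan2}, and its right-hand side is a ${}_4\phi_3$-type truncated sum whose specialization is exactly $[n]^2q^{1-n}S$---this confirms your reverse-engineering of the $[n]^8$ prefactor, but it is obtained by a transformation, not a summation. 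Two further inaccuracies: the congruence modulo $[n]$ does not follow from ``a direct evaluation at a primitive $n$-th root of unity'' (that only yields modulo $\Phi_n(q)$); one must apply the second part of Lemma~\ref{Lemma6} at every $n$-th root of unity $\zeta\neq1$, using the multiplicativity $z_\zeta(lm+k)/z_\zeta(lm)=z_\zeta(k)$. And the limit $a,b\to1$ cannot be taken in one step, since the CRT coefficients have denominators $(a-b)(1-ab)$ vanishing there; the paper sends $b\to1$ first and then $a\to1$ with L'H\^opital's rule.
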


The rest of the paper is organized as follows. We shall prove Theorem \ref{THM1} in the next section by making use of the `creative microscoping'
method and the Chinese remainder theorem for coprime polynomials.
Finally, in Section 3, we give a proof of
Theorem \ref{THM5} through establishing its parametric generalization.

\section{Proof of Theorem \ref{THM1}}
We first give an important result due to El Bachraoui \cite{Bachraoui1}, which plays an important role in our proof.
\begin{lemma}\label{Lemma6}
Let $d$, $n$, $t$ be positive integers with $d\geq t\geq 2$ and $n\equiv 1\pmod d$.
Let $\{c(k)\}^\infty_{k=0}$ be a sequence of complex numbers.
If $c(k)=0$ for $(n-1)/d<k<n$, then
\begin{equation}\label{Q8}
\sum_{{k_1+\cdots+k_t\leq n-1 }}c(k_1)\cdots c(k_t)=\left(\sum_{k=0}^{\frac{n-1}{d}}c(k)\right)^t,
\end{equation}
Furthermore, if $c(ln+k)/c(ln)=c(k)$ for all nonnegative integers $k$ and $l$ such that $0\leq k\leq n-1$,
then,
\begin{align}
&\sum_{{k_1+\cdots+k_t=ln+k}}c(k_1)c(k_2)\cdots c(k_t)\nonumber\\
&\quad\quad=\sum_{{k_1+\cdots+k_t=k}}c(k_1)c(k_2)\cdots c(k_t)\nonumber\\
&\quad\quad\times\sum_{s_1=0}^{l}c(s_1n)\sum_{s_2=0}^{l-s_1}c(s_2n)\cdots
\sum_{s_{t-1}=0}^{l-s_1-\cdots-s_{t-2}}c(s_{t-1}n)c\left((l-s_1-\cdots-s_{t-1})n\right).\label{Q7}
\end{align}
\end{lemma}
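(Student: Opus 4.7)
The plan is to derive both identities from the vanishing hypothesis on $c$ stated in part (a) (which remains in force throughout part (b)) combined with the arithmetic inequality $t\le d$; no $q$-analytic machinery is needed.

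For the first identity \eqref{Q8}, I would begin by observing that any nonzero summand $c(k_1)\cdots c(k_t)$ must have each index $k_i\in\{0,1,\ldots,(n-1)/d\}$: the outer constraint gives $0\le k_i\le n-1$, and the hypothesis $c(k)=0$ for $(n-1)/d<k<n$ kills everything in the complementary window. The inequality $t\le d$ then yields $k_1+\cdots+k_t\le t(n-1)/d\le n-1$ automatically, so the simplex constraint becomes redundant. Dropping it, the sum factors as the $t$-th power of $\sum_{k=0}^{(n-1)/d}c(k)$, which is exactly \eqref{Q8}.

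For the second identity \eqref{Q7}, my strategy is to decompose each summation index by Euclidean division by $n$. Writing $k_i=s_i n+r_i$ with $s_i\ge 0$ and $0\le r_i\le n-1$, the multiplicativity hypothesis gives $c(k_i)=c(s_i n)c(r_i)$, while the vanishing condition forces each surviving $r_i$ into $\{0,1,\ldots,(n-1)/d\}$ and leaves the $s_i$ free over nonnegative integers. The constraint $k_1+\cdots+k_t=ln+k$ then reads $n\sum_i s_i+\sum_i r_i=ln+k$; because $\sum_i r_i\le t(n-1)/d\le n-1$ while $0\le k\le n-1$, the only consistent possibility is $\sum_i r_i=k$ together with $\sum_i s_i=l$. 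Consequently the multiple sum factorises into a ``residue factor'' $\sum_{r_1+\cdots+r_t=k}c(r_1)\cdots c(r_t)$ times a ``quotient factor'' $\sum_{s_1+\cdots+s_t=l,\,s_i\ge 0}c(s_1 n)\cdots c(s_t n)$.

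The remaining step is cosmetic: rewrite the quotient factor in the nested telescoping form on the right of \eqref{Q7} by successively freezing $s_1\in\{0,\ldots,l\}$, then $s_2\in\{0,\ldots,l-s_1\}$, and so on, until $s_t=l-s_1-\cdots-s_{t-1}$ is forced. The real obstacle, and the heart of the argument, is the arithmetic step that collapses the a priori possible residue totals $k,k+n,\ldots,k+(t-1)n$ down to the single value $k$; once that is secured by the chain $\sum_i r_i\le t(n-1)/d\le n-1<n$, everything else is bookkeeping. The tight bound $t\le d$ is precisely what makes this collapse possible.
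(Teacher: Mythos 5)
Your argument is correct. Note first that the paper does not actually prove this lemma: it is imported verbatim from El Bachraoui's preprint (reference \cite{Bachraoui1}) as a known result, so there is no in-paper proof to compare against; your write-up therefore supplies a proof the paper omits. Both halves of your reasoning check out. For \eqref{Q8}, the constraint $k_1+\cdots+k_t\le n-1$ forces each $k_i\le n-1$, the vanishing hypothesis then confines nonzero terms to $k_i\le (n-1)/d$, and $t\le d$ makes the simplex constraint vacuous, so the sum factors. For \eqref{Q7}, you correctly observe two points that are easy to get wrong: (i) the vanishing hypothesis must be read as remaining in force in the ``furthermore'' clause --- without it the identity fails (e.g.\ $c\equiv 1$ is multiplicative but violates \eqref{Q7}) --- and (ii) the heart of the matter is that among the a priori possible residue totals $k, k+n,\ldots,k+(t-1)n$ only $\sum_i r_i=k$ survives, because $\sum_i r_i\le t(n-1)/d\le n-1<n$. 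After that, the Euclidean decomposition $k_i=s_in+r_i$ together with $c(s_in+r_i)=c(s_in)c(r_i)$ splits the sum into the product of the residue factor and the quotient factor, and the nested sums on the right of \eqref{Q7} are just $\sum_{s_1+\cdots+s_t=l}c(s_1n)\cdots c(s_tn)$ written iteratively. Your direct one-pass factorization is arguably cleaner than an induction on $t$, which is the other natural route for a statement presented in that telescoped form.
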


In order to prove Theorem \ref{THM1}, we need to confirm the following two lemmas based on the result presented in \cite{Guo_Schlosser3}.
\begin{lemma}\label{Lem5}
Let $n$, $d$ be  positive integers  with $d\geq3$ and $n\equiv1\pmod d$, $a$ and $b$  indeterminates.  For $k\geq0$, let
\begin{equation*}
z_q(k)=[2dk+1]\frac{\left(q,aq,q/a,q/b;q^d\right)_k}{\left(q^d,aq^d,q^d/a,bq^d;q^d\right)_k}b^kq^{(d-2)k}.
\end{equation*}
Then,  we have, modulo $[n](1-aq^n)(a-q^n)$,
\begin{align}
\sum_{{i+j+k\leq n-1}}z_q(i)z_q(j)z_q(k)
\equiv
[n]^3\frac{\left(b/q\right)^{\frac{3(n-1)}{d}}\left(q^2/b;q^d\right)^3_{\frac{n-1}{d}}}{\left(bq^d;q^d\right)^3_{\frac{n-1}{d}}}
.\label{L6}
\end{align}
\end{lemma}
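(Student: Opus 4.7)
The strategy is to reduce the triple sum in \eqref{L6} to the cube of a single sum via El Bachraoui's combinatorial identity (Lemma \ref{Lemma6}), and then to cube a single-sum creative-microscoping identity extracted from Guo and Schlosser \cite{Guo_Schlosser3}.

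First, I would check the vanishing hypothesis of Lemma \ref{Lemma6} modulo the full modulus $[n](1-aq^n)(a-q^n)$: namely, that $z_q(k)\equiv 0$ for $(n-1)/d<k\le n-1$. For such $k$, the numerator factor $(aq;q^d)_k$ contains $1-aq^{1+d(n-1)/d}=1-aq^n$ at index $j=(n-1)/d$, so $z_q(k)\equiv 0\pmod{1-aq^n}$; the mirrored factor in $(q/a;q^d)_k$, after clearing $a$-denominators, contributes $a-q^n$; and $(q;q^d)_k$ contributes $1-q^n=(1-q)[n]$ at the same index. Since $n\equiv 1\pmod d$ forces $\gcd(d,n)=1$, a $\Phi_e(q)$-valuation check shows the cyclotomic content from the numerator dominates that coming from the denominator $(q^d;q^d)_k$. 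As the three factors $[n]$, $1-aq^n$, $a-q^n$ are pairwise coprime in $\mathbb{Z}[q,a]$, we conclude $z_q(k)\equiv 0\pmod{[n](1-aq^n)(a-q^n)}$ on the required range.

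Applying Lemma \ref{Lemma6} with $t=3$ and $c(k)=z_q(k)$, read inside the quotient ring, then gives
\begin{equation*}
\sum_{i+j+k\le n-1} z_q(i)z_q(j)z_q(k) \equiv \bigl(\textstyle\sum_{k=0}^{(n-1)/d} z_q(k)\bigr)^{3} \pmod{[n](1-aq^n)(a-q^n)}.
\end{equation*}
Next, I would invoke (or derive by a routine specialization of) the single-sum Guo--Schlosser creative-microscoping identity
\begin{equation*}
\sum_{k=0}^{(n-1)/d} z_q(k) \equiv [n]\,(b/q)^{(n-1)/d}\,\frac{(q^2/b;q^d)_{(n-1)/d}}{(bq^d;q^d)_{(n-1)/d}} \pmod{[n](1-aq^n)(a-q^n)},
\end{equation*}
and cube both sides. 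Cubing preserves the modulus, since $A\equiv B\pmod M$ gives $A^3-B^3=(A-B)(A^2+AB+B^2)\equiv 0\pmod M$; combining with the previous display yields \eqref{L6}.

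The main obstacle is the valuation analysis for composite $n$: for each proper divisor $e\mid n$ with $1<e<n$, one must verify that the $\Phi_e(q)$-contributions from $(q;q^d)_k$ in the numerator and from the prefactor $[2dk+1]$ together dominate the $\lfloor k/e\rfloor$ copies of $\Phi_e(q)$ sitting in $(q^d;q^d)_k$. A secondary technical point is matching the exact shape of the single-sum identity in \cite{Guo_Schlosser3} to the normalization of $z_q(k)$ used here, which may require a small base-change or parameter substitution rather than a direct quotation.
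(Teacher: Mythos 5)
Your treatment of the factor $(1-aq^n)(a-q^n)$ is essentially sound and close in spirit to the paper (which instead substitutes $a=q^{\pm n}$ into Guo--Schlosser's identity to get exact vanishing of the tail terms and an exact evaluation of the single sum, then cubes via Lemma \ref{Lemma6}). The genuine gap is in the factor $[n]$. You propose to show $z_q(k)\equiv 0\pmod{[n]}$ for $(n-1)/d<k<n$ by arguing that the cyclotomic content of the numerator dominates that of the denominator, and you flag this valuation check as "the main obstacle." It is not an obstacle that can be overcome: the claim is false for composite $n$. For a proper divisor $e$ of $n$ with $e>1$ (so $\gcd(e,d)=1$), the factor $(q;q^d)_k$ contributes $\Phi_e(q)$ exactly as often as $(q^d;q^d)_k$ does (both counts are essentially $\lfloor k/e\rfloor$, governed by a single residue class of the index modulo $e$), and $[2dk+1]$ contributes nothing since $\gcd(2dk+1,e)$ need not exceed $1$. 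Concretely, for $d=3$, $n=4$, $k=2$ one has $z_q(2)=[13]\,(1-q)(1-q^4)\cdots/((1-q^3)(1-q^6)\cdots)$, whose $\Phi_2(q)$-valuation is $1-1=0$, so $z_q(2)\not\equiv 0\pmod{[4]}$. Hence equation \eqref{Q8} cannot be applied termwise in the quotient ring modulo $[n]$, and your reduction of the triple sum to the cube of the single sum fails modulo $[n]$ (it only works modulo $\Phi_n(q)(1-aq^n)(a-q^n)$).

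The paper closes exactly this gap with the second half of Lemma \ref{Lemma6}: it proves that the \emph{whole} triple sum, not each term, is divisible by $[n]$, by evaluating at every $n$-th root of unity $\zeta\neq 1$ (a primitive $m$-th root with $m\mid n$, $m>1$), using the multiplicativity $z_\zeta(lm+s)=z_\zeta(lm)z_\zeta(s)$ to factor each block $\sum_{i+j+k=lm+s}$ via \eqref{Q7}, and then invoking the vanishing of $\sum_{i+j+k\leq m-1}z_\zeta(i)z_\zeta(j)z_\zeta(k)$, which does follow from \eqref{Q8} at $q=\zeta$ because there $z_\zeta(k)=0$ exactly for $(m-1)/d<k<m$ and the single sum vanishes by the Guo--Schlosser congruence modulo $[n]$. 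You would need to incorporate this root-of-unity argument (or an equivalent) to obtain the modulus $[n]$; as written, your proof only establishes the congruence modulo $\Phi_n(q)(1-aq^n)(a-q^n)$.
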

\begin{proof}
For $n=1$, the result is clearly true. We now assume that $n$ is an  integer with $n>1$.
Recall that Guo and Schlosser \cite[Equation (4.3)]{Guo_Schlosser3} gave the following result:
modulo ${[n](1-aq^n)(a-q^n)(b-q^n)}$,
\begin{align}
\sum_{k=0}^{M}z_q(k)
&\equiv[n]
\frac{\left(b/q\right)^{\frac{n-1}{d}}\left(q^{2}/b;q^d\right)_{\frac{n-1}{d}}}
{\left(bq^d;q^d\right)_{\frac{n-1}{d}}}\frac{(b-q^n)(ab-1-a^2+aq^n)}{(a-b)(1-ab)}\nonumber\\
&\quad+[n]\frac{(q,q^{d-1};q^d)_{\frac{n-1}{d}}}{(aq^d,q^d/a;q^d)_{\frac{n-1}{d}}}\frac{(1-aq^n)(a-q^n)}{(a-b)(1-ab)}
,\label{Equation3}
\end{align}
where $M=(n-1)/d$ or $n-1$.
It is clear to see that $\left(q;q^d\right)_k\equiv0\pmod{\Phi_n(q)}$ for $(n-1)/d<k<n$. This means that
$z_q(k)\equiv0 \pmod{\Phi_n(q)}$ for $(n-1)/d<k<n$. Therefore, employing  Lemma \ref{Lemma6} with $t=3$ produces
\begin{equation}\label{Equation4}
\sum_{{i+j+k\leq n-1}}z_q(i)z_q(j)z_q(k)
\equiv0
\pmod{\Phi_n(q)}.
\end{equation}
Let $\zeta\neq1$ be an $n$-th root of unity, not necessarily primitive.
Namely, $\zeta$ is a primitive $m$-th root of unity with $m\mid n$ and $m>1$.
From \eqref{Equation4}, we get
\begin{equation}\label{EEEE5}
\sum_{{i+j+k\leq m-1}}z_{\zeta}(i)z_{\zeta}(j)z_{\zeta}(k)=0.
\end{equation}
It is not difficult to check that $z_{\zeta}(lm+s)/z_{\zeta}(lm)=z_{\zeta}(s)$ for nonnegative integers $l$ and $s$ with $0\leq s\leq m-1$.
Applying  Lemma \ref{Lemma6} with $t=3$ again, we have
\begin{align}
&\sum_{{i+j+k\leq n-1}}z_{\zeta}(i)z_{\zeta}(j)z_{\zeta}(k)\nonumber\\
&\quad=\sum_{r=0}^{n-1}\sum_{i+j+k=r}z_{\zeta}(i)z_{\zeta}(j)z_{\zeta}(k)\nonumber\\
&\quad=\sum_{l=0}^{\frac{n}{m}-1}\sum_{s=0}^{m-1}\sum_{i+j+k=lm+s}z_{\zeta}(i)z_{\zeta}(j)z_{\zeta}(k)\nonumber\\
&\quad=\sum_{l=0}^{\frac{n}{m}-1}\sum_{a=0}^{l}z_{\zeta}(am)\sum_{t=0}^{l-a}z_{\zeta}(tm)z_{\zeta}((l-a-t)m)
\sum_{s=0}^{m-1}\sum_{i+j+k=s}z_{\zeta}(i)
z_{\zeta}(j)z_{\zeta}(k)\nonumber\\
&\quad=0.\nonumber
\end{align}
Since the above equality is true for any $n$-th root of unity $\zeta\neq1$, we conclude that
\begin{equation}\label{EEEE6}
\sum_{{i+j+k\leq n-1}}z_{q}(i)z_{q}(j)z_{q}(k)\equiv0\pmod{[n]}.
\end{equation}
On the other hand, letting $a=q^n$ or $q^{-n}$ in \eqref{Equation3},  we have
\begin{equation*}
\sum_{k=0}^{M}\tilde{z}_q(k)=[n]
\frac{\left(b/q\right)^{\frac{n-1}{d}}\left(q^{2}/b;q^d\right)_{\frac{n-1}{d}}}{\left(bq^d;q^d\right)_{\frac{n-1}{d}}}
,
\end{equation*}
where
\begin{equation*}
\tilde{z}_q(k)=[2dk+1]\frac{\left(q,q^{1+n},q^{1-n},q/b;q^d\right)_k}{\left(q^d,q^{d+n},q^{d-n},bq^d;q^d\right)_k}b^kq^{(d-2)k}.
\end{equation*}
Note the fact that $\tilde{z}_q(k)=0$ for $(n-1)/d<k<n$. Using Lemma \ref{Lemma6} with $t=3$ again, we arrive at
\begin{equation*}
\sum_{{i+j+k\leq n-1}}\tilde{z}_q(i)\tilde{z}_q(j)\tilde{z}_q(k)
=
[n]^3\frac{\left(b/q\right)^{\frac{3(n-1)}{d}}\left(q^2/b;q^d\right)^3_{\frac{n-1}{d}}}{\left(bq^d;q^d\right)^3_{\frac{n-1}{d}}}.
\end{equation*}
Then, we have the following $q$-congruence involving a variable $a$: modulo $\left(1-aq^n\right)\left(a-q^n\right)$,
\begin{equation}\label{Equation6}
\sum_{{i+j+k\leq n-1}}z_q(i)z_q(j)z_q(k)
\equiv
[n]^3\frac{\left(b/q\right)^{\frac{3(n-1)}{d}}\left(q^2/b;q^d\right)^3_{\frac{n-1}{d}}}{\left(bq^d;q^d\right)^3_{\frac{n-1}{d}}}.
\end{equation}
From \eqref{EEEE6} and \eqref{Equation6},
we conclude that \eqref{Equation6} is true modulo $[n]\left(1-aq^n\right)\left(a-q^n\right)$,
 as desired.
\end{proof}
\begin{lemma}\label{Lem6}
Let $n$, $d$ be  positive integers  with $d\geq3$ and $n\equiv1\pmod d$, $a$ and $b$ indeterminates. For $k\geq0$, let
\begin{equation*}
z_q(k)=[2dk+1]\frac{\left(q,aq,q/a,q/b;q^d\right)_k}{\left(q^d,aq^d,q^d/a,bq^d;q^d\right)_k}b^kq^{(d-2)k}.
\end{equation*}
Then, we have
\begin{align}
\sum_{{i+j+k\leq n-1}}z_q(i)z_q(j)z_q(k)
\equiv
[n]^3\frac{\left(q,q^{d-1};q^d\right)^3_{\frac{n-1}{d}}}{\left(aq^d,q^d/a;q^d\right)^3_{\frac{n-1}{d}}} \pmod{b-q^n}.
\label{L4}
\end{align}
\end{lemma}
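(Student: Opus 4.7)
The plan is to follow the same two-pronged strategy used for Lemma \ref{Lem5}, but this time exploit the modulus $b-q^n$ directly: that is, pass to the residue ring in which $b$ is replaced by $q^n$. Since $b-q^n$ divides $[n](1-aq^n)(a-q^n)(b-q^n)$, the Guo--Schlosser identity \eqref{Equation3} automatically holds modulo $b-q^n$, so I can simply specialize.

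First I would substitute $b=q^n$ on the right-hand side of \eqref{Equation3}. The first summand there carries the visible factor $(b-q^n)$ and therefore vanishes. In the second summand the quotient $(1-aq^n)(a-q^n)/((a-b)(1-ab))$ specializes to $(1-aq^n)(a-q^n)/((a-q^n)(1-aq^n))=1$, leaving
\begin{equation*}
\sum_{k=0}^{M} z_q(k)\equiv [n]\,\frac{(q,q^{d-1};q^d)_{(n-1)/d}}{(aq^d,q^d/a;q^d)_{(n-1)/d}}\pmod{b-q^n},
\end{equation*}
valid for $M=(n-1)/d$ as well as $M=n-1$.

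Next I would verify that with $b=q^n$ the summand $z_q(k)$ vanishes for $(n-1)/d<k<n$. Indeed $(q/b;q^d)_k$ then becomes $(q^{1-n};q^d)_k$, which contains the factor $1-q^{1-n+jd}$ with $j=(n-1)/d$; this factor equals $0$, and it sits in the product as soon as $k\geq (n-1)/d+1$. This is exactly the hypothesis needed to invoke Lemma \ref{Lemma6} with $t=3$, giving the identity
\begin{equation*}
\sum_{i+j+k\leq n-1}z_q(i)z_q(j)z_q(k)=\Bigl(\sum_{k=0}^{(n-1)/d}z_q(k)\Bigr)^{3}\pmod{b-q^n}.
\end{equation*}
Cubing the single-sum evaluation from the previous step yields the desired right-hand side, and this completes the proof.

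I do not anticipate any genuine obstacle: the proof is essentially a bookkeeping exercise, and the two ingredients (the Guo--Schlosser evaluation and Lemma \ref{Lemma6}) do all the work. The only place where one must be mildly careful is confirming the cancellation $(1-aq^n)(a-q^n)/((a-q^n)(1-aq^n))=1$ in Step 1 and pinning down the exact range of $k$ for which $z_q(k)|_{b=q^n}=0$, so that Lemma \ref{Lemma6} applies cleanly.
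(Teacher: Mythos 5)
Your proposal is correct and follows essentially the same route as the paper: specialize $b=q^n$ in the Guo--Schlosser congruence \eqref{Equation3} (killing the first summand and reducing the second coefficient to $1$), check that the specialized summand vanishes for $(n-1)/d<k<n$ via the factor $(q^{1-n};q^d)_k$, and then cube the single-sum evaluation using Lemma \ref{Lemma6} with $t=3$. No gaps.
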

\begin{proof}
Letting $b=q^n$ in \eqref{Equation3}, we get
\begin{equation*}
\sum_{k=0}^{M}\hat{z}_q(k)
=
[n]
\frac{\left(q,q^{d-1};q^d\right)_{\frac{n-1}{d}}}{\left(aq^d,q^d/a;q^d\right)_{\frac{n-1}{d}}},
\end{equation*}
where
\begin{equation*}
\hat{z}_q(k)=[2dk+1]\frac{\left(q,aq,q/a,q^{1-n};q^d\right)_k}{\left(q^d,aq^d,q^d/a,q^{d+n};q^d\right)_k}q^{(n+d-2)k}.
\end{equation*}
Note that $\hat{z}_q(k)=0$ for $(n-1)/d<k<n$. From Lemma \ref{Lemma6} with $t=3$, we obtain
\begin{equation*}
\sum_{{i+j+k\leq n-1}}\hat{z}_q(i)\hat{z}_q(j)\hat{z}_q(k)
=
[n]^3
\frac{\left(q,q^{d-1};q^d\right)^3_{\frac{n-1}{d}}}{\left(aq^d,q^d/a;q^d\right)^3_{\frac{n-1}{d}}},
\end{equation*}
which means that
\begin{equation*}
\sum_{{i+j+k\leq n-1}}z_q(i)z_q(j)z_q(k)
\equiv
[n]^3
\frac{\left(q,q^{d-1};q^d\right)^3_{\frac{n-1}{d}}}{\left(aq^d,q^d/a;q^d\right)^3_{\frac{n-1}{d}}}\pmod{b-q^n}
\end{equation*}
is true.
\end{proof}

\begin{proof}[Proof of Theorem \ref{THM1}]
It is easy to see that $[n]\left(1-aq^n\right)\left(a-q^n\right)$ and $b-q^n$ are relatively prime polynomials. Noting the relations
\begin{align*}
\frac{\left(b-q^n\right)(ab-1-a^2+aq^n)}{(a-b)(1-ab)}&\equiv 1 \pmod{\left(1-aq^n\right)(a-q^n)},\\
\frac{(1-aq^n)(a-q^n)}{(a-b)(1-ab)}&\equiv 1 \pmod{b-q^n},
\end{align*}
and employing the Chinese remainder theorem for coprime polynomials, we  derive the following result
from \eqref{L6} and \eqref{L4}: modulo $[n](1-aq^n)(a-q^n)(b-q^n)$,
\begin{align}
&\sum_{{i+j+k\leq n-1}}z_q(i)z_q(j)z_q(k)\nonumber\\
&\quad\quad\equiv
[n]^3\left\{
\frac{(b-q^n)(ab-1-a^2+aq^n)}{(a-b)(1-ab)}
\frac{\left(b/q \right)^{\frac{3(n-1)}{d}}\left(q^2/b;q^d\right)^3_{\frac{n-1}{d}}}{\left(bq^d;q^d\right)^3_{\frac{n-1}{d}}}\right.\nonumber\\
&\quad\quad\quad\quad\left.+\frac{(1-aq^n)(a-q^n)}{(a-b)(1-ab)}
\frac{\left(q,q^{d-1};q^d\right)^3_{\frac{n-1}{d}}}{\left(aq^d,q^d/a;q^d\right)^3_{\frac{n-1}{d}}}\right\}
.\label{L5}
\end{align}
Taking $b\rightarrow1$ in \eqref{L5}, we deduce that, modulo $\Phi_n(q)^2(1-aq^n)(a-q^n)$,
\begin{align}
\sum_{{i+j+k\leq n-1}}z_q(i)z_q(j)z_q(k)
\equiv
[n]^3
q^{\frac{3(1-n)}{d}}
\frac{(q^2;q^d)^3_{\frac{n-1}{d}}}{(q^d;q^d)^3_{\frac{n-1}{d}}}
.\label{Eequation7}
\end{align}
Here we have employed the relation:
\begin{equation*}
(1-q^n)(1+a^2-a-aq^n)=(1-a)^2+(1-aq^n)(a-q^n).
\end{equation*}
Finally, letting $a\rightarrow1$ in \eqref{Eequation7} and noting $1-q^n$ has the factor $\Phi_n(q)$,
we see that the  congruence \eqref{AE1} is true modulo $\Phi_n(q)^4$. On the other hand, our proof of
\eqref{EEEE6} is still valid for $a=b=1$, which means that \eqref{AE1} is also correct modulo $[n]$.
Since the least common multiple of $\Phi_n(q)^4$ and $[n]$ is $[n]\Phi_n(q)^3$, we finish the proof of Theorem \ref{THM1}.
\end{proof}

\section{Proof  of Theorem  \ref{THM5}}\label{Section4}
We first establish a parametric extension of Theorem \ref{THM5} as follows.
\begin{theorem}\label{THM3}
Let $d$ and $n$ be positive integers with $d\geq4$ and $n\equiv 1\pmod d$, $c$ an indeterminate. For $k\geq0$, let
\begin{equation*}
c_q(k)=[2dk+1]\frac{(q;q^d)^5_k(cq;q^d)_k}{(q^d;q^d)^5_k\left(q^d/c;q^d\right)_k}\left(\frac{q^{2d-3}}{c}\right)^k.
\end{equation*}
Then,  we have,  modulo $[n]\Phi_n(q)^4$,
\begin{align}
&\sum_{{j_1+j_2+j_3+j_4\leq n-1}}c_q(j_1)c_q(j_2)c_q(j_3)c_q(j_4)\nonumber\\
&\quad\quad\equiv
[n]^4\left(cq\right)^{\frac{4(1-n)}{d}}
\frac{\left(cq^2;q^d\right)^4_{\frac{n-1}{d}}}{(q^d/c;q^d)^4_{\frac{n-1}{d}}}
\left(\sum_{k=0}^{\frac{n-1}{d}}\frac{\left(q,q,cq,q^{d-1};q^d\right)_k}{\left(q^d,q^d,q^d,cq^2;q^d\right)_k}q^{dk}\right)^4
.
\label{Q1}
\end{align}
\end{theorem}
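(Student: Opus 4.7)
The plan is to follow the template of Theorem \ref{THM1}: introduce creative microscoping parameters $a,b$, establish parametric single-sum and quadruple-sum $q$-congruences modulo appropriate linear factors in $a,b,q^n$, combine these via the Chinese remainder theorem for coprime polynomials, and finally specialize $a,b\to 1$ to extract the desired cyclotomic modulus.

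Concretely, I would deform $c_q(k)$ via
\begin{align*}
z_q(k) = [2dk+1]\frac{(q;q^d)_k(aq,q/a,bq,q/b,cq;q^d)_k}{(q^d;q^d)_k(aq^d,q^d/a,bq^d,q^d/b,q^d/c;q^d)_k}\left(\frac{q^{2d-3}}{c}\right)^k,
\end{align*}
so that $z_q(k)|_{a=b=1}=c_q(k)$. The crux is a parametric single-sum $q$-congruence of Guo--Schlosser type, modulo $[n](1-aq^n)(a-q^n)(b-q^n)$, whose CRT interpolant at $a=b=1$ reduces to $[n](cq)^{(1-n)/d}\frac{(cq^2;q^d)_{(n-1)/d}}{(q^d/c;q^d)_{(n-1)/d}}$ times the un-summable ${}_4\phi_3$-type series appearing in \eqref{Q1}. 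I would establish this identity by specializing $a=q^{\pm n}$ and $b=q^{n}$---each of which terminates a shifted factorial at $k=(n-1)/d$---identifying the resulting terminating ${}_8\phi_7$-type sum via a Watson- or Andrews--Bressoud-type transformation of basic hypergeometric series, and then gluing the three evaluations via CRT.

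The lift to quadruple sums proceeds by the pattern of Lemmas \ref{Lem5} and \ref{Lem6}, now with $t=4$ in Lemma \ref{Lemma6}. Using $(q;q^d)_k\equiv 0\pmod{\Phi_n(q)}$ for $(n-1)/d<k<n$, equation \eqref{Q8} of Lemma \ref{Lemma6} at each terminating specialization, and a root-of-unity argument patterned on Lemma \ref{Lem5} (via equation \eqref{Q7} of Lemma \ref{Lemma6} to handle the 4-fold convolution at primitive $m$-th roots of unity with $m\mid n$, $m>1$), one obtains quadruple-sum congruences modulo $[n](1-aq^n)(a-q^n)$ and modulo $(b-q^n)$. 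Their CRT combination gives the quadruple sum modulo $[n](1-aq^n)(a-q^n)(b-q^n)$; taking $b\to 1$ and then $a\to 1$, together with the same algebraic identity $(1-q^n)(1+a^2-a-aq^n)=(1-a)^2+(1-aq^n)(a-q^n)$ used in Theorem \ref{THM1}'s proof to cancel the apparent pole at $a=1$, delivers the congruence modulo $[n]\Phi_n(q)^4$.

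The principal difficulty is the parametric single-sum identity. Unlike the closed-form Guo--Schlosser identity \eqref{Equation3} invoked for Theorem \ref{THM1}, here the right-hand side must \emph{retain} the un-summable ${}_4\phi_3$-type series appearing in \eqref{Q1}. Identifying the correct $q$-hypergeometric transformation---plausibly of Watson or Andrews--Bressoud type---that produces exactly this residual series at each terminating specialization $a=q^{\pm n}$ and $b=q^{n}$, and then verifying that the CRT interpolant collapses cleanly to the desired expression when $a=b=1$, is the delicate technical heart of the argument.
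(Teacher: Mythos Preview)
Your deformation $z_q(k)$ and the overall microscoping--CRT--specialize strategy match the paper's, but your modulus is one factor short. You propose working modulo $[n](1-aq^n)(a-q^n)(b-q^n)$, exactly mirroring the Theorem~\ref{THM1} template. Count the cyclotomic factors this yields after $b\to 1$ and $a\to 1$: the three linear factors each contribute one $\Phi_n(q)$, and $[n]$ contributes one more, giving $\Phi_n(q)^4$; combined with the separate modulo-$[n]$ argument you obtain only $[n]\Phi_n(q)^3$, not the required $[n]\Phi_n(q)^4$. The paper (Lemma~\ref{THM4}) instead works modulo the \emph{symmetric} product $[n](1-aq^n)(a-q^n)(1-bq^n)(b-q^n)$, specializing $b=q^{\pm n}$ (both signs) to pick up two factors in $b$, not one. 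After $b\to 1$ this gives $\Phi_n(q)^3(1-aq^n)(a-q^n)$, then $a\to 1$ (via L'H\^opital, since the right-hand side has a genuine $0/0$ at $a=1$) yields $\Phi_n(q)^5$, hence $[n]\Phi_n(q)^4$. Your use of the algebraic identity $(1-q^n)(1+a^2-a-aq^n)=(1-a)^2+(1-aq^n)(a-q^n)$ is tailored to the asymmetric three-factor setup and will not suffice here.

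On the single-sum input: you correctly identify that the right-hand side must retain the non-closed ${}_4\phi_3$-type sum and flag finding the right transformation as the technical crux, but you do not actually produce it. The paper sidesteps this entirely by quoting an existing parametric congruence of Wei \cite[Theorem~5.7, $r=1$]{WeiChuanan2}, which already delivers \eqref{AE9} modulo the full symmetric modulus. So the ``delicate technical heart'' you anticipate is in fact imported wholesale from the literature, and the remaining work is the quadruple-sum lift via Lemma~\ref{Lemma6} and the CRT--limit passage you outline.
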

Clearly, Theorem \ref{THM5} can be gotten by setting $c=q^{d-1}$ and $d=4$ in Theorem \ref{THM3}.

In order to confirm Theorem \ref{THM3}, we need to prove the following parametric extension.
\begin{lemma}\label{THM4}
Let $d$ and $n$ be positive integers with $d\geq4$ and $n\equiv 1\pmod d$, $a$, $b$ and $c$  indeterminates. For $k\geq0$, let
\begin{equation*}
z_q(k,a,b)=[2dk+1]\frac{\left(aq,q/a,bq,q/b,cq,q;q^d\right)_k}{\left(q^d/q,aq^d,q^d/b,bq^d,q^d/c,q^d;q^d\right)_k}\left(\frac{q^{2d-3}}{c}\right)^k.
\end{equation*}
Then, modulo $[n](1-aq^n)(a-q^n)(1-bq^n)(b-q^n)$,
\begin{align}
&\sum_{{j_1+j_2+j_3+j_4\leq n-1}}z_q(j_1,a,b)z_q(j_2,a,b)z_q(j_3,a,b)z_q(j_4,a,b)\nonumber\\
&\quad\quad\equiv[n]^4\left(cq\right)^{\frac{4(1-n)}{d}}\frac{\left(cq^2;q^d\right)^4_{\frac{n-1}{d}}}{\left(q^d/c;q^d\right)_{\frac{n-1}{d}}}\nonumber\\
&\quad\quad \times
\left\{
\frac{(1-bq^n)(b-q^n)(-1-a^2+aq^n)}{(a-b)(1-ab)}
\left(\sum_{k=0}^{\frac{n-1}{d}}\frac{\left(aq,q/a,cq,q^{d-1};q^d\right)_k}{\left(bq^d,q^d/b,cq^2,q^d;q^d\right)_k}q^{dk}\right)^4\right.\nonumber\\
&\quad\quad \left.\quad+
\frac{(1-aq^n)(a-q^n)(-1-b^2+bq^n)}{(b-a)(1-ab)}
\left(\sum_{k=0}^{\frac{n-1}{d}}\frac{\left(bq,q/b,cq,q^{d-1};q^d\right)_k}{\left(aq^d,q^d/a,cq^2,q^d;q^d\right)_k}q^{dk}\right)^4
\right\}
.\label{AE4}
\end{align}
\end{lemma}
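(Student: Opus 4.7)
The plan is to mirror the architecture of Section 2: establish the congruence modulo the two coprime factors $(1-aq^n)(a-q^n)$ and $(1-bq^n)(b-q^n)$ separately by creative microscoping, glue them together with the Chinese remainder theorem for coprime polynomials, and pick up the extra factor $[n]$ via a roots-of-unity argument. Because $z_q(k,a,b)$ carries the factor $(q;q^d)_k$, one has $z_q(k,a,b)\equiv 0\pmod{\Phi_n(q)}$ for $(n-1)/d<k<n$, so Lemma \ref{Lemma6} with $t=4$ collapses the quadruple sum modulo $\Phi_n(q)$ to $\bigl(\sum_{k=0}^{(n-1)/d} z_q(k,a,b)\bigr)^4$.

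To pin down the congruence modulo $(1-aq^n)(a-q^n)$, I would specialise $a=q^{\pm n}$. Either of the factors $(aq;q^d)_k$ or $(q/a;q^d)_k$ then vanishes identically for $(n-1)/d<k<n$, so the collapse above upgrades to an honest polynomial equality. The key analytic input will be a closed-form evaluation of the single sum $\sum_{k=0}^{(n-1)/d} z_q(k,q^{\pm n},b)$, which I expect to obtain by applying a terminating $q$-hypergeometric transformation in the spirit of \cite[Eq.~(4.3)]{Guo_Schlosser3}, now carrying the extra parameter $c$; the output should contain the inner sum $\sum_{k=0}^{(n-1)/d}(aq,q/a,cq,q^{d-1};q^d)_k/(bq^d,q^d/b,cq^2,q^d;q^d)_k\,q^{dk}$ specialised at $a=q^{\pm n}$. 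Raising to the fourth power matches the first bracketed summand on the right-hand side of \eqref{AE4}, because the prefactor $(1-bq^n)(b-q^n)(-1-a^2+aq^n)/((a-b)(1-ab))$ collapses to $1$ at $a=q^{\pm n}$ (the same cancellation as in the proof of Lemma \ref{Lem5}), while the second bracketed summand is annihilated by its own prefactor $(1-aq^n)(a-q^n)$. By the symmetry of $z_q(k,a,b)$ in $a$ and $b$, the analogous argument with the roles of $a$ and $b$ exchanged handles the modulus $(1-bq^n)(b-q^n)$.

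The polynomials $(1-aq^n)(a-q^n)$ and $(1-bq^n)(b-q^n)$ are coprime (they involve disjoint indeterminates $a$ and $b$), so the Chinese remainder theorem merges the two specialisations into one congruence modulo their product. To upgrade the modulus by the factor $[n]$, I would repeat the non-primitive-root computation between \eqref{EEEE5} and \eqref{EEEE6}: for any $\zeta\ne 1$ with $\zeta^m=1$ and $m\mid n$, the scaling $z_\zeta(lm+s,a,b)/z_\zeta(lm,a,b)=z_\zeta(s,a,b)$ still holds (the argument depends only on $\zeta^n=1$ and $n\equiv 1\pmod d$), and the second half of Lemma \ref{Lemma6} with $t=4$ factors the quadruple sum through the partial sums $\sum_{j_1+\cdots+j_4=s}z_\zeta(j_1,a,b)\cdots z_\zeta(j_4,a,b)$ with $0\le s\le m-1$, each of which vanishes because of the factor $(1-\zeta)=0$ hidden in $(q;q^d)_{j_i}$ at $q=\zeta$.

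The hard part will be establishing the single-sum evaluation at $a=q^{\pm n}$: a terminating $q$-hypergeometric identity generalising \cite[Eq.~(4.3)]{Guo_Schlosser3} so as to carry the auxiliary parameter $c$. Once that identity is secured, the CRT bookkeeping, the collapse of the $a^2,b^2$-type prefactors via identities analogous to $(1-q^n)(1+a^2-a-aq^n)=(1-a)^2+(1-aq^n)(a-q^n)$ used after \eqref{Eequation7}, and the roots-of-unity step are routine adaptations of the arguments in Section 2.
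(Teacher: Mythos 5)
Your architecture is the same as the paper's: specialise $a=q^{\pm n}$ and $b=q^{\pm n}$ to collapse the quadruple sum to the fourth power of a single sum via Lemma \ref{Lemma6} with $t=4$, glue the two specialisations with the Chinese remainder theorem for coprime polynomials, and obtain the extra factor $[n]$ from the roots-of-unity computation. However, there is one genuine gap and one incorrect justification. The gap is the single-sum input that you yourself flag as ``the hard part'': the entire proof hinges on a congruence for $\sum_{k=0}^{M}z_q(k,a,b)$ modulo $[n](1-aq^n)(a-q^n)(1-bq^n)(b-q^n)$ carrying the extra parameter $c$, and you neither prove it nor identify a source. The paper does not derive it from scratch either --- it imports it as the case $r=1$ of \cite[Theorem 5.7]{WeiChuanan2}, which is \eqref{AE9}; without that citation or an independent derivation your argument does not get off the ground, since Guo--Schlosser's \cite[Eq.~(4.3)]{Guo_Schlosser3} carries no parameter $c$ and only two microscoping variables.

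The incorrect justification concerns the $[n]$ step. You claim that each partial sum $\sum_{j_1+\cdots+j_4=s}z_\zeta(j_1,a,b)\cdots z_\zeta(j_4,a,b)$ with $0\le s\le m-1$ vanishes ``because of the factor $(1-\zeta)=0$ hidden in $(q;q^d)_{j_i}$''. Since $\zeta\neq1$ we have $1-\zeta\neq0$, and these partial sums do not vanish individually. What vanishes is their aggregate over $0\le s\le m-1$, because by the first half of Lemma \ref{Lemma6} it equals $\bigl(\sum_{k}z_\zeta(k,a,b)\bigr)^4$ and the single sum vanishes at $q=\zeta$ precisely by the (imported) single-sum congruence modulo $[n]$; this is \eqref{AEE3} in the paper, so this step too ultimately rests on the missing input. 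The rest of your outline --- the vanishing of $(aq;q^d)_k$ or $(q/a;q^d)_k$ for $(n-1)/d<k<n$ at $a=q^{\pm n}$, the collapse of the prefactors to $1$ under specialisation, the coprimality of the three moduli --- is correct and matches the paper; note only that the identity $(1-q^n)(1+a^2-a-aq^n)=(1-a)^2+(1-aq^n)(a-q^n)$ you invoke belongs to the subsequent limits $b\to1$ and $a\to1$ in the proof of Theorem \ref{THM3}, not to this lemma.
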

\begin{proof}
For $n=1$, the result is obviously  true. We now assume that $n$ is an integer with $n>1$.
Setting $r=1$ in \cite[Theorem 5.7]{WeiChuanan2}, we have, modulo $[n](1-aq^n)(a-q^n)(1-bq^n)(b-q^n)$,
\begin{align}
&\sum_{k=0}^{M}z_q(k,a,b)\nonumber\\
&\:\equiv
[n](cq)^{\frac{1-n}{d}}\frac{(cq^2;q^d)_{\frac{n-1}{d}}}{(q^d/c;q^d)_{\frac{n-1}{d}}}\nonumber\\
&\quad\times
\left\{
\frac{(1-bq^n)(b-q^n)(-1-a^2+aq^n)}{(a-b)(1-ab)}\sum_{k=0}^{\frac{n-1}{d}}
\frac{\left(aq,q/a,cq,q^{d-1};q^d\right)_k}{\left(aq^d,q^d/a,cq^2,q^d;q^d\right)_k}q^{dk}\right.\nonumber\\
&\quad\left.\quad+
\frac{(1-aq^n)(a-q^n)(-1-b^2+bq^n)}{(b-a)(1-ab)}\sum_{k=0}^{\frac{n-1}{d}}
\frac{\left(bq,q/b,cq,q^{d-1};q^d\right)_k}{\left(aq^d,q^d/a,cq^2,q^d;q^d\right)_k}q^{dk}
\right\},\label{AE9}
\end{align}
where $M=(n-1)/d$ or $n-1$.
Then, from \eqref{AE9}, we can deduce that
\begin{align}
\sum_{k=0}^{M}z_q(k,a,b)\equiv0\pmod{[n]}.\label{AE8}
\end{align}
Since $\left(q;q^d\right)_k\equiv 0\pmod{\Phi_n(q)}$ for $(n-1)/d<k<n$, we deduce that $z_q(k,a,b)\equiv0\pmod{\Phi_n(q)}$ for $(n-1)/d<k<n$.
Then, applying  Lemma \ref{Lemma6} with $t=4$, we arrive at
\begin{equation}\label{AEE2}
\sum_{{j_1+j_2+j_3+j_4\leq n-1}}z_q(j_1,a,b)z_q(j_2,a,b)z_q(j_3,a,b)z_q(j_4,a,b)
\equiv0\pmod{\Phi_n(q)}.
\end{equation}
Letting $\zeta\neq1$ be an $n$-th root of unity, not necessarily primitive.
Namely, $\zeta$ is a primitive $m$-th root of unity with $m\mid n$ and $m>1$.
From \eqref{AEE2}, we have
 \begin{equation}\label{AEE3}
\sum_{{j_1+j_2+j_3+j_4\leq m-1}}z_\zeta(j_1,a,b)z_\zeta(j_2,a,b)z_\zeta(j_3,a,b)z_\zeta(j_4,a,b)
=0.
\end{equation}
Observe that $z_\zeta(lm+k)/z_\zeta(lm)=z_\zeta(k)$ for nonnegative integers $l$ and $k$ with $0\leq k\leq m-1$.
Using Lemma \ref{Lemma6} with $t=4$ again, we obtain
\begin{align*}
&\sum_{{j_1+j_2+j_3+j_4\leq n-1}}z_\zeta(j_1,a,b)
z_\zeta(j_2,a,b)z_\zeta(j_3,a,b)z_\zeta(j_4,a,b)\\
&\quad=\sum_{r=0}^{n-1}
\sum_{{j_1+j_2+j_3+j_4= r}}z_\zeta(j_1,a,b)z_\zeta(j_2,a,b)z_\zeta(j_3,a,b)z_\zeta(j_4,a,b)\\
&\quad=\sum_{l=0}^{\frac{n}{m}-1}\sum_{k=0}^{m-1}\sum_{{j_1+j_2+j_3+j_4=lm+k}}
z_\zeta(j_1,a,b)z_\zeta(j_2,a,b)z_\zeta(j_3,a,b)z_\zeta(j_4,a,b)\\
&\quad=\sum_{l=0}^{\frac{n}{m}-1}\sum_{k_1=0}^{l}z_\zeta(k_1m,a,b)
\sum_{k_2=0}^{l-k_1}z_\zeta(k_2m,a,b)
\sum_{k_3=0}^{l-k_1-k_2}z_\zeta(k_3m,a,b)z_\zeta\left((l-k_1-k_2-k_3)m,a,b\right)\\
&\quad\quad\times\sum_{k=0}^{m-1}\sum_{{j_1+j_2+j_3+j_4=k}}z_\zeta(j_1,a,b)z_\zeta(j_2,a,b)z_\zeta(j_3,a,b)z_\zeta(j_4,a,b)\\
&\quad=0,
\end{align*}
which indicates that
\begin{align}\label{L1}
\sum_{\substack{j_1+j_2+j_3+j_4\leq n-1}}z_q(j_1,a,b)z_q(j_2,a,b)z_q(j_3,a,b)z_q(j_4,a,b)\equiv0\pmod{[n]}
\end{align}
is true.

Moreover, letting $a=q^n$ or $a=q^{-n}$ in \eqref{AE9} gives
\begin{align}\label{AAE1}
\sum_{k=0}^{M}z_q(k,q^n,b)
&=\sum_{k=0}^{M}z_q(k,q^{-n},b)\nonumber\\
&=
[n](cq)^{\frac{1-n}{d}}\frac{(cq^2;q^d)_{\frac{n-1}{d}}}{(q^d/c;q^d)_{\frac{n-1}{d}}}
\sum_{k=0}^{\frac{n-1}{d}}\frac{(q^{1-n},q^{1+n},cq,q^{d-1};q^d)_k}{(bq^d,q^d/b,cq^2,q^d;q^d)_k}q^{dk}.
\end{align}
Noticing that $z_q(k,q^n,b)=z_q(k,q^{-n},b)=0$ for $(n-1)/d<k<n$. Then, employing Lemma \ref{Lemma6} with $t=4$, we obtain, modulo $(1-aq^n)(a-q^n)$,
\begin{align}
&\sum_{{j_1+j_2+j_3+j_4\leq n-1}}z_q(j_1,a,b)z_q(j_2,a,b)z_q(j_3,a,b)z_q(j_4,a,b)\nonumber\\
&\quad\quad\quad\equiv[n]^4(cq)^{\frac{4(1-n)}{d}}\frac{(cq^2;q^d)^4_{\frac{n-1}{d}}}{(q^d/c;q^d)^4_{\frac{n-1}{d}}}
\left(\sum_{k=0}^{\frac{n-1}{d}}\frac{(aq,q/a,cq,q^{d-1};q^d)_k}{(bq^d,q^d/b,cq^2,q^d;q^d)_k}q^{dk}\right)^4.\label{AAE2}
\end{align}

Similarly, setting $b=q^n$ or $b=q^{-n}$ in \eqref{AE9} and utilizing Lemma \ref{Lemma6} with $t=4$ again, we get, modulo $(1-bq^n)(b-q^n)$,
\begin{align}
&\sum_{{j_1+j_2+j_3+j_4\leq n-1}}z_q(j_1,a,b)z_q(j_2,a,b)z_q(j_3,a,b)z_q(j_4,a,b)\nonumber\\
&\quad\quad\quad\equiv[n]^4(cq)^{\frac{4(1-n)}{d}}\frac{(cq^2;q^d)^4_{\frac{n-1}{d}}}{(q^d/c;q^d)^4_{\frac{n-1}{d}}}
\left(\sum_{k=0}^{\frac{n-1}{d}}\frac{(bq,q/b,cq,q^{d-1};q^d)_k}{(aq^d,q^d/a,cq^2,q^d;q^d)_k}q^{dk}\right)^4.\label{AAE3}
\end{align}

Noting that the polynomials $[n]$, $(1-aq^n)(a-q^n)$ and $(1-bq^n)(b-q^n)$ are relatively prime.
Then,  utilizing the Chinese remainder theorem for coprime polynomials and the relations:
\begin{align*}
\frac{(1-bq^n)(b-q^n)(-1-a^2+aq^n)}{(a-b)(1-ab)}&\equiv1\pmod{(1-aq^n)(a-q^n)},\\
\frac{(1-aq^n)(a-q^n)(-1-b^2+bq^n)}{(b-a)(1-ab)}&\equiv1\pmod{(1-bq^n)(b-q^n)},
\end{align*}
we can obtain the desired
result from \eqref{L1}, \eqref{AAE2} and \eqref{AAE3}.
\end{proof}

Subsequently, we start to prove Theorem \ref{THM3}.
\begin{proof}[Proof of Theorem \ref{THM3}]
Letting $b\rightarrow1$ in the congruence \eqref{AE4} produces, modulo $\Phi_n(q)^3(1-aq^n)(a-q^n)$,
\begin{align}\label{L-3}
&\sum_{{j_1+j_2+j_3+j_4\leq n-1}}z_q(j_1,a,1)z_q(j_2,a,1)z_q(j_3,a,1)z_q(j_4,a,1)\nonumber\\
&\quad\quad\equiv
[n]^4\left(cq\right)^{\frac{4(1-n)}{d}}\frac{(cq^2;q^d)^4_{\frac{n-1}{d}}}{(q^d/c;q^d)^4_{\frac{n-1}{d}}}\nonumber\\
&\quad\quad
\times
\left\{
\frac{(1-q^n)^2(-1-a^2+aq^n)}{(a-1)(1-a)}
\left(\sum_{k=0}^{\frac{n-1}{d}}\frac{(aq,q/a,cq,q^{d-1};q^d)_k}{(q^d,q^d,cq^2,q^d;q^d)_k}q^{dk}\right)^4\right.\nonumber\\
&\quad\left.\quad-
\frac{(1-aq^n)(a-q^n)(-2+q^n)}{(a-1)(1-a)}
\left(\sum_{k=0}^{\frac{n-1}{d}}\frac{(q,q,cq,q^{d-1};q^d)_k}{(aq^d,q^d/a,cq^2,q^d;q^d)_k}q^{dk}\right)^4
\right\}.
\end{align}
Next, setting $a\rightarrow1$ and utilizing the L'Hospital rule in \eqref{L-3}, we arrive at
\begin{align}
&\sum_{{j_1+j_2+j_3+j_4\leq n-1}}c_q(j_1)c_q(j_2)c_q(j_3)c_q(j_4)\nonumber\\
&\quad\equiv
[n]^4\left(cq\right)^{\frac{4(1-n)}{d}}
\frac{\left(cq^2;q^d\right)^4_{\frac{n-1}{d}}}{(q^d/c;q^d)^4_{\frac{n-1}{d}}}
\left(\sum_{k=0}^{\frac{n-1}{d}}\frac{\left(q,q,cq,q^{d-1};q^d\right)_k}{\left(q^d,q^d,q^d,cq^2;q^d\right)_k}q^{dk}\right)^4
\pmod{\Phi_n(q)^5}.
\label{AE6}
\end{align}
On the other hand, our proof of
\eqref{L1} is still valid for $a=b=1$, which means that \eqref{AE6} is also correct modulo $[n]$.
Since the least common multiple  of $\Phi_n(q)^5$ and $[n]$ is $[n]\Phi_n(q)^4$, we derive  Theorem \ref{THM3}
\end{proof}

\end{document}